\newtheorem{proposition}{Proposition}
\newtheorem{definition}{Definition}
\newtheorem{theorem}{Theorem}
\newcommand{\C}{\mathbb{C}}
\newcommand{\A}{\mathbb{A}}
\newcommand{\M}{\mathcal{M}}
\newcommand{\Span}{\mathbf{Span}}
\newcommand{\RG}{\mathbf{RG}}
\newcommand{\Grpd}{\mathbf{Grpd}}
\newcommand{\Cat}{\mathbf{Cat}}
\newcommand{\MG}{\mathbf{MG}}
\newcommand{\PreGrpd}{\mathbf{PreGrpd}}
\newcommand{\Kite}{\mathbf{DiKite}}
\newcommand{\MKite}{\mathbf{MKite}}
\begin{document}

\title[A classification theorem for Mal'tsev categories]{The Lawvere condition and a classification theorem for Mal'tsev categories}

%\titlerunning{Short form of title}        % if too long for running head

\author{N. Martins-Ferreira}
\address[Nelson Martins-Ferreira]{Instituto Politécnico de Leiria, Leiria, Portugal}
\thanks{ }
\email{martins.ferreira@ipleiria.pt}

\begin{abstract}
A classification theorem for three different sorts of Mal'tsev categories is proven. The theorem provides a classification for Mal'tsev category, naturally Malt'sev category, and weakly Mal'tsev category in terms of classifying classes of spans. The class of all spans characterizes naturally Mal'tsev categories. The class of relations (i.e. jointly monomorphic spans) characterizes Mal'tsev categories. The class of strong relations (i.e. jointly strongly monomorphic spans) characterizes weakly Mal'tsev categories. The result is based on the uniqueness of internal categorical structures such as internal category and internal groupoid (Lawvere condition). The uniqueness of these structures is viewed as a property on their underlying reflexive graphs, restricted to the classifying spans. The class of classifying spans is combined, via a new compatibility condition, with split squares. This is analogous to orthogonality between spans and cospans. The result is a general classifying scheme which covers the main characterizations for Mal'tsev like categories. The class of positive relations has recently been shown to characterize Goursat categories and hence it is a new example that fits in this general scheme.
\keywords{Mal'tsev category \and naturally Mal'tsev \and weakly Mal'tsev \and internal category \and internal groupoid \and multiplicative graph \and directed kite \and compatibility between a split square and a span}
% \PACS{PACS code1 \and PACS code2 \and more}
% \subclass{MSC code1 \and MSC code2 \and more}
\end{abstract}

\maketitle

\date{Received: date / Accepted: date}
% The correct dates will be entered by the editor

\section{Introduction}
\label{intro}
A Mal'tsev category \cite{CLP} can be defined in several equivalent ways. One possibility is to say that it is a category in which every relation is difunctional. A naturally Mal'tsev category \cite{Johnstone} can be defined as one in which every reflexive graph is the underlying graph of a unique groupoid structure (the Lawvere condition). These two notions are well known and widely studied and yet there is still lacking a general result explicitating the  features  that are common to both cases. The influencial paper \cite{Bourn} implicitly suggests a unification in terms of the fibration of pointed objects. Indeed, and in spite of not being its main purpose,  it is shown there that a category is a naturaly Mal'tsev category if and only if its fibration of pointed objects is an addive category. Moreover, it classifies Mal'tsev categories as those for which the fibration of pointed objects is unital (see \cite{BB} for further details). In addition, there is a third new case sharing some of the common properties and similarities with the other two. The notion introduced in \cite{NMF.08}, was called weakly Mal'tsev category, and it is defined as a category with local products (pullbacks of split eplimorphisms along split epimorphisms) in which every local product injection co-span is jointly epimorphic. This definition compares with the characterization of Mal'tsev categories in terms of the fibration of pointed objects \cite{Bourn}. Indeed, saying that the fibration of pointed objects is unital is the same as saying that every local product injection cospan is jointly strongly epimorphic. Furthermore, in  \cite{ZJ.NMF.12} it is proved that a category is weakly Mal'tsev if and only if every strong relation (i.e. jointly strongly monomorphic span) if difunctional. 

 Results in \cite{NMF.VdL.14} show the similarity between the three notions of Mal'tsev  categories by dealing with the appropriate classes  of spans: \\
%\begin{enumerate}
(a) $\mathcal{M}$ is the class of all spans --- naturally Mal'tsev\\
(b) $\mathcal{M}$ is the class of all relations --- Mal'tsev categories\\
(c) $\mathcal{M}$ is the class of all strong relations --- weakly Mal'tsev categories
%\end{enumerate}

The purpose of this paper is to prove a general classifying theorem which combines the results from \cite{NMF.VdL.14} and \cite{ZJ.NMF.12} with those from \cite{Bourn}. The theorem is proven in Section 3. Section 2 is dedicated to terminology and definitions.

An example of a naturally Mal'tsev category is the category of abelian groups. An example of a Mal'tsev category which is not a naturally Mal'tsev category is the category of groups. An example of a category which is a weakly Mal'tsev category but not a Mal'tsev category is the category of commutative monoids with cancellation. Weakly Mal'tsev categories can also be used to detect distributive lattices amongst all lattices. Let us denote by $\mathbf{Lat}$ the category of all lattices and let $\mathbf{DLat}$ denote the category of distributive lattices. Combining results from \cite{ZJ.NMF.12} with those from  \cite{NMF.12} we get:
\begin{proposition}
Let $I\colon{\C\to \mathbf{Lat}}$ be a full subcategory of lattices and suppose it reflects pullbacks. The following conditions are equivalent:
\begin{enumerate}
\item every strong relation in $\C$ is difunctional;
\item the functor $I$ factors through $
\mathbf{DLat}$;
\item the category $\C$ is a weakly Mal'tsev category
\end{enumerate}
\end{proposition}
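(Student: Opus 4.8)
The plan is to split the biconditional into the two halves $(1)\Leftrightarrow(3)$ and $(2)\Leftrightarrow(3)$, each anchored to one of the results already quoted in the introduction, with the hypothesis that $I$ reflects pullbacks playing the role of the glue that lets the categorical conditions be computed inside $\C$. The equivalence $(1)\Leftrightarrow(3)$ is precisely the theorem of \cite{ZJ.NMF.12} recalled above: a category is weakly Mal'tsev exactly when every strong relation in it is difunctional. To apply it I first have to know that $\C$ carries the structure that definition presupposes, namely local products, i.e. pullbacks of split epimorphisms along split epimorphisms. This is where reflection of pullbacks is used: since $\mathbf{Lat}$ is a variety it has all pullbacks, and reflection together with fullness of $I$ identifies the local products of $\C$ with those formed in $\mathbf{Lat}$ and guarantees that ``strong relation'' and ``difunctional'' are tested in $\C$ just as in the ambient category. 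Once this bookkeeping is in place, $(1)\Leftrightarrow(3)$ is a direct transcription.

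The real content is the equivalence of these two conditions with $(2)$, and for this I would invoke \cite{NMF.12}, whose relevant output is that $\mathbf{DLat}$ is a weakly Mal'tsev category while $\mathbf{Lat}$ is not, the obstruction being distributivity itself. Granting this, $(2)\Rightarrow(3)$ is an inheritance argument: if $I$ factors through $\mathbf{DLat}$ then $\C$ is a full subcategory of a weakly Mal'tsev category, its local products agree with those of $\mathbf{DLat}$ by reflection of pullbacks, and joint epimorphy of each local-product injection cospan descends to $\C$ because a full subcategory sees exactly the same arrows out of a given object. Hence $\C$ inherits the defining property of weak Mal'tsevness.

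The converse $(3)\Rightarrow(2)$ I would argue contrapositively, and this is the step I expect to be the main obstacle. If some object $L$ of $\C$ is not distributive, the classical characterization of distributivity produces a sublattice of $L$ isomorphic to the diamond $M_3$ or the pentagon $N_5$. From such a sublattice one manufactures a split square whose local product carries an element that cannot be generated by the lattice operations from the images of the two injections --- equivalently, a strong relation that is not difunctional --- so that $\C$ fails to be weakly Mal'tsev. The delicate points are to write the span explicitly inside $M_3$ and $N_5$, to verify that it is genuinely \emph{strong} (jointly strongly monomorphic) rather than a mere relation, and above all to check that the witnessing diagram lives in $\C$ and not only in the ambient $\mathbf{Lat}$, since $\C$ need not a priori be closed under the auxiliary products involved; reflection of pullbacks is again what certifies that the counterexample descends. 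Combining $(1)\Leftrightarrow(3)$ with the resulting $(2)\Leftrightarrow(3)$ then yields the full equivalence.
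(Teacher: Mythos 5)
Your proposal takes essentially the same route as the paper, whose entire proof consists of combining the main theorem of \cite{ZJ.NMF.12} (a category is weakly Mal'tsev iff every strong relation in it is difunctional, giving $(1)\Leftrightarrow(3)$) with the main theorem of \cite{NMF.12} (giving $(2)\Leftrightarrow(3)$). The only discrepancy is that you under-state what \cite{NMF.12} provides: its main result is precisely the characterization of weakly Mal'tsev full subcategories of $\mathbf{Lat}$ by distributivity of their objects --- including the $M_3$/$N_5$ counterexample construction you flag as the ``main obstacle'' --- so the contrapositive argument you sketch for $(3)\Rightarrow(2)$ need not be reproved by hand.
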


%The purpose of this paper is to generalize the results in the two papers \cite{NMF.VdL.14} and \cite{ZJ.NMF.12}.

The context in which a Mal'tsev category is usually defined is that of a regular category. Briefly, a category is said to be regular when it has finite limits, coequalizers, and pullback stable regular epimorphisms (see for example \cite{BB} and references therein). This gives rise to a factorization system which is important when working with a calculus of relations. For the moment we are not paying attention to the classical results that characterize Mal'tsev varieties in terms of permutability for composition of equivalence relations \cite{Mal'tsev}. That study is postponed to a future work. Thus, we have chosen to establish our main result in its most general context, which goes way beyond regular categories. Surprisingly, not even products are necessary. This may seem strange --- indeed products are often used when working with naturally Mal'tsev categories. The following result, which prescribes a method for constructing examples, shows the relevance of having a category with binary products. Nevertheless, as we will see in our main result, only pullbacks and equalizers are required.

Let $\mathbb{A}$ and $\mathbb{B}$ be two categories with finite limits and let $F\colon{\mathbb{A}\to \mathbb{B}}$ be a functor preserving finite limits. If there is a natural transformation $$p_A\colon{F(A)\times F(A) \times F(A)\to F(A)}$$ such that $$p_A(x,y,y)=x=p_A(y,y,x),$$ and:
\begin{enumerate}
\item[(i)] the functor $F$ is faithful, then $\A$ is a weakly Mal'tsev category~\cite{NMF.17};
\item[(ii)]  the functor $F$ is faithful and conservative, then $\A$ is a Mal'tsev category \cite{Pedicchio};
\item[(iii)] the functor $F$ is an isomorphism, then $\A$ is a naturally Mal'tsev category~\cite{Johnstone}
\end{enumerate}

For a concrete example take $\mathbb{A}$ to be the category of abelian groups. Then we have the natural transformation $p(x,y,z)=x-y+z$. In this case the functor $F$ is the identity functor. If we take $\mathbb{A}$ to be the category of groups, not necessarily abelian, then the transformation $p$ is no longer natural. However, if we take $\mathbb{B}$ to be the category of sets and maps and letting $F$ to be the forgetful functor, then  $p$ is a natural transformation. Another example is obtained by taking the forgetful functor from the category of preordered groups into sets, which is no longer conservative but it is still faithful (see \cite{Preord}).

As explained before, our interest is to classify the three different cases of Mal'tsev like categories in terms of classifying classes of spans. This gives rise to a general scheme for classifying other possible cases. Remarkably, we only need to assume that the class of spans contains all identity spans and it is stable under pullbacks. No further assumptions are required in order to establish the equivalent conditions of Theorem~\ref{theorem 1}.

In a category with binary products, a class $\M$ of spans is said to be stable under pullbacks if the pullback of any span $(d,c)$ in $\M$, seen as a morphism $\langle d,c\rangle$ into a product, is stable under pullback. The notion is easily adapted to the case when products are not available.

Let us briefly recall how the main result from \cite{ZJ.NMF.12} would be stated if translated into the language of the present article.

Suppose $\mathbb{C}$ is a category with pullbacks and equalizers. Let $\M$ be a class of relations (i.e. jointly monic spans) in $\C$ which contains all the identity relations and is stable under pullbacks. Then  the following conditions are equivalent:
\begin{enumerate}
\item every reflexive relation in $\M$ is an equivalence relation;
\item every reflexive relation in $\M$ is a transitive relation;
\item every relation in $\M$ is a difunctional relation;
\item The corresponding class of cospans orthogonal to $\M$ contains local product injection pairs;
\end{enumerate}

The restriction on the class of spans $\M$, requireing it to consist only of relations, was overcame in \cite{NMF.VdL.14}.  However, this generalization was obtained with a price --- the condition (4), claiming that the corresponding class of cospans orthogonal to $\M$ contains local product injection pairs, had to be removed. The result, if translated to the terminology of this paper would be as Theorem \ref{theorem 1} (see Section 3) except that its condition (10) would not be present.

It is the purpose of this paper to fill in this gap. In order to understand the statements of Theorem \ref{theorem 1} let us make some observations which will then be developed in more detail in Section 2.

Considering the diagram below, let us denote by $F_i^{\M}$, with $i=1,2,3,4$, the restriction to the class $\M$ of the obvious forgetful functors, respectively ordered  1 to 4, from pregroupoids ($\PreGrpd$) to spans ($\Span$) and from multilicative graphs ($\MG$), internal categories ($\Cat$), internal groupoids ($\Grpd$) to reflexive graphs ($\RG$). More details are given below and also in Section \ref{sec:2}. 

\begin{equation}\label{diag:F1toF4}
\xymatrix{
\Grpd(\C)\ar[d]\ar[r]^{F_4} & \RG(\C)\ar@{=}[d]\\
\Cat(\C)\ar[d]\ar[r]^{F_3} & \RG(\C)\ar@{=}[d]\\
\MG(\C)\ar[r]^{F_2} & \RG(\C)\ar@<-.5ex>[d]\\
\PreGrpd(\C)\ar[u]\ar[r]^{F_1} & \Span(\C)\ar@<-.5ex>[u] & \M\ar[l]\\
%\MKite(\C)\ar[r]^{F_0} & \Kite(\C)\ar@<-.5ex>[u]_{}
}
\end{equation}

The main result of this paper establishes that 
 the following conditions are equivalent:\\
%\begin{enumerate}
1. The functor $F_i^{\M}$ is an isomorphism, with $i=1,2,3,4$\\
2. The functor $F_i^{\M}$ has a section, with $i=1,2,3,4$\\
3. Every split square in $\C$ is $\M$-compatible.
%\end{enumerate}

The notion of a split square being compatible to a span is a specialization of the notion of a cospan being orthogonal to a span. The precise statement is given in Definition \ref{def1}. 

In the presence of products and pushouts it can be stated as follows: a split square (such as the one displayed in (\ref{split square})) is said to be compatible with a span $(d,c)$ when  for every outer commutative diagram, such as the one below (with notation borrowed from diagrams $(\ref{split square})$ and $(\ref{split pulback})$)
\[
\xymatrix{A+_B C 
\ar[rrr]
\ar[drr]^{\epsilon} 
\ar[rd]|{e=[e_1,e_2]}
\ar[dd]_{\alpha=[\alpha_1,\alpha_2]} 
&&& C\times A
 \ar[dd]^{d\alpha_2\times c\alpha_1}
 \\
& E
 \ar[r]^(.35){\langle p_1,p_2\rangle}
\ar@{-->}[dl]_{u} 
& A\times_B C 
\ar@{-->}[lld]^{\theta}
\ar[ru]^{\langle \pi_2,\pi_1 \rangle} 
\ar[rd]_{\delta}
\\
D
 \ar[rrr]^{\langle d,c\rangle} &&& 
 D_0\times D_1} 
\]
if there exists $u\colon{E\to D}$ such that $ue=\alpha$ and $\langle d,c\rangle u=\delta \langle p_1,p_2\rangle$, then there exists a unique morphism $\theta$ such that $\theta \epsilon =\alpha$ and   $\langle d,c\rangle \theta=\delta$. 
Note that all solid triangles commute, so, in particular, the morphism  $\delta$ is equal to $(d\alpha_2\times c\alpha_1)\langle \pi_2,\pi_1\rangle=\langle due_2\pi_2,cue_1\pi_1\rangle$ and $\epsilon=\langle p_1,p_2\rangle e$. Compare with Definition \ref{def1}.

It will be interesting to deepen the analogy with the orthogonality between $\epsilon$ and $\langle d,c \rangle$. This, however, would take us far astray and is reserved for a future work.

For convenience let us give some more details on the diagram  of categories and functors shown before, diagram (\ref{diag:F1toF4}). Further details are postponed to the next section.
In the diagram, $\RG(\C)$ denotes the category of reflexive graphs internal to $\C$. Its objects are the diagrams in $\C$ of the shape
\begin{equation}\label{diag: reflexive graph}\xymatrix{C_1 \ar@<1ex>[r]^{d} \ar@<-1ex>[r]_{c} & C_0 \ar[l]|{e} }
\end{equation}
 with $de=1_{C_0}$ and $ce=1_{C_0}$. If needed, a reflexive graph can be represented as a five-tuple $(C_1,C_0,d,e,c)$. A span is a diagram in $\C$ of the shape
\begin{equation}\label{diag: span}\xymatrix{& D \ar[ld]_{d} \ar[rd]^{c}\\D_0 && D_1}\end{equation}
 with no conditions. It is sometimes represented as a triple $(D,d,c)$. The two objects $D_0$ and $D_1$  do not usually  need to be made explicit. The functor $\RG(\C)\to\Span(\C)$ associates to every five-tuple $(C_1,C_0,d,e,c)$ the triple $(C_1,d,c)$. This means that any  class  $\M$ of spans in $\C$, which can be seen as a full subcategory of $\Span(\C)$, induces a full subcategory of $\RG(\C)$. This category can be obtained by a pullback in the category of all categories and it will be denoted $\RG(\C,\M)$. Accordingly, when the class $\M$ is  seen as a full subcategory of $\Span(\C)$ we write  $\M\cong\Span(\C,\M)$. This gives us the functors 
 \begin{enumerate}
\item[]  $F_1^{\M}\colon{\PreGrpd(\C,\M)\to\Span(\C,\M)}$
\item[]  $F_2^{\M}\colon{\MG(\C,\M)\to\RG(\C,\M)}$
\item[]  $F_3^{\M}\colon{\Cat(\C,\M)\to\RG(\C,\M)}$
\item[]   $F_4^{\M}\colon{\Grpd(\C,\M)\to\RG(\C,\M)}$
 \end{enumerate}
In section 2 we give the details on the constructions and all the new definitions that are required for the Main Theorem, which is stated and proved in section 3. 

% For the moment let us observe the statements on our theorem for some important special classes that will have a specific meaning later on.

%If the class $\M$ consists of all difunctional relations in $\C$ that are stable under pullback, then the equivalent conditions on our theorem are true in every category $\C$. Indeed, for instance, its item (6) says precisely that every span in $\M$ has a unique pregroupoid structure, which, in the case of relations, reduces to the fact that every relation in $\M$ is difunctional. However, when we take $\M$ to be the class of all relations then the equivalent conditions hold if and only if the category $\C$ is a Mal'tsev category. Moreover, if the class $\M$ is the class of all strong relations \cite{ZJ.NMF.12} then the equivalent conditions of our theorem hold true if and only if the category is weakly Mal'tsev. Finally, it is also an easy observation to conclude that when we take $\M$ to be the class of  all spans, then the equivalent conditions are satisfied if and only if the category is naturally Mal'tsev.

%We also introduce a new notion asserting a certain compatibility between a split square and a span, see Definition \ref{def1}. This notion of compatibility is the link with another kind of characterization for a Mal'tsev category (due to D. Bourn \cite{}) which says that a category is a Mal'tsev category if and only if for every split square, its  comparison morphism into the pullback of the two base splitings, is a regular epimorphism.
 
\section{Review on internal categorical structures}\label{sec:2}
 
This section has the purpose of recalling some well-know definitions, establishing notation to be used later on, to introduce a new kind of categorical structures (a kite, a directed kite and a multiplicative kite) and to define a new notion for compatibility between a split squares and a spans. This new notion  is somehow similar to the notion of orthogonality between a cospan and a span, see \cite{ZJ.NMF.12}. For long time we  have struggled to find the appropriate notion of a copan orthogonal to a span (in the sense of \cite{ZJ.NMF.12}) that would complete the last item in  Theorem \ref{theorem 1}. It turns out that the solution was not to be found as an orthogonality condition between arbitrary cospans and spans but rather only between  those cospans which appear as part of a split square. Moreover, the whole structure of a spit square has to be considered, see Definition~\ref{def1}.

Throughout this paper $\C$ will denote a category  with pullbacks and equalizers. All the structures and diagrams are internal to $\C$.

\subsection{Reflexive graphs and spans}

A reflexive graph is a diagram of the shape $(\ref{diag: reflexive graph})$ in which the condition $de=1_{C_0}=ce$ holds true. It can be represented as a five-tuple $(C_1,C_0,d,e,c)$.
A morphism between reflexive graphs, say form $(C_1,C_0,d,e,c)$ to $(C'_1,C'_0,d',e',c')$, is a pair of morphisms $f=(f_1,f_0)$, displayed as
\begin{equation}\label{diag: morphims of reflexive graphs}\xymatrix{C_1 \ar@<1ex>[r]^{d} \ar@<-1ex>[r]_{c}\ar[d]_{f_1} & C_0 \ar[l]|{e}\ar[d]^{f_0} \\
C'_1 \ar@<1ex>[r]^{d'} \ar@<-1ex>[r]_{c'} & C'_0 \ar[l]|{e'} }
\end{equation}
 and such that $d'f_1=f_0d$, $c'f_1=f_0c$ and $f_1e=e'f_0$.
The category of reflexive graphs is denoted $\RG(\C)$. A span is a diagram of the shape $(\ref{diag: span})$ with no further conditions. It is also represented as $(D,d,c)$. The category of spans is denoted $\Span(\C)$. There is an obvious functor $\RG(\C)\to\Span(\C)$ assigning the span $(C_1,d,c)$ to every reflexive graph $(C_1,C_0,d,e,c)$. Any class $\M$ of spans in $\C$ can be seen as a full subcategory $\M\to\Span(\C)$. For the sake of consistency we will write $\Span(\C,\M)$ to denote the full subcategory of $\Span(\C)$ determined by the spans in the class $\M$. Similarly we obtain $\RG(\C,\M)$ as the full subcategory of $\RG(\C)$ whose span part is in $\M$, in other words, it can be seen as a pullback  in the category of categories and functors.
\[\xymatrix{\RG(\C,\M)\ar[r]\ar[d]&\RG(\C)\ar[d]\\ \Span(\C,\M) \ar[r]&\Span(\C)}\]

\subsection{Multiplicative graphs and reflexive graphs} 
 
The category of multiplicative graphs internal to $\C$ was introduced in \cite{Janelidze} and  will be denoted as $\MG(\C)$. Its objects are the diagrams in $\C$ of the form
\begin{equation}\label{diag: multiplicative graph}\xymatrix{C_2 \ar@<2.5ex>[r]^{\pi_2}\ar@<-2.5ex>[r]_{\pi_1}\ar[r]|{m} & C_1 \ar@<1.5ex>[l]|{e_1} \ar@<-1.5ex>[l]|{e_2}\ar@<1ex>[r]^{d} \ar@<-1ex>[r]_{c} & C_0 \ar[l]|{e} }
\end{equation}
in which $(C_1,C_0,d,e,c)$ is a reflexive graph,
\begin{eqnarray}
m e_1 = &1_{C_1}& = me_2\\
d m &=& d \pi_2\\
c m &=& c \pi_1,
\end{eqnarray}
the square 
\begin{equation}
\xymatrix{C_2 \ar[r]^{\pi_2}\ar[d]_{\pi_1} & C_1 \ar[d]^{c}\\
 C_1\ar[r]^{d} & C_0}
\end{equation}
is a pullback square and the maps $e_1$, $e_2$ are uniquely determined as $e_1=\langle 1_{C_1},ed\rangle$ and $e_2=\langle ec, 1_{C_1} \rangle$. 
 
A multiplicative graph, displayed as in diagram $(\ref{diag: multiplicative graph})$ will be referred to as a six-tuple $(C_1,C_0,d,e,c,m)$. The canonical morphisms from the pullback $\pi_1,\pi_2$ as well as the induced morphisms $e_1,e_2$ into the pullback are implicit.
 
Morphisms are triples $f=(f_2,f_1,f_0)$ in which $(f_1,f_0)$ is a morphism of reflexive graphs and $f_2=f_1\times_{f_0}f_1$ is such that $f_1m=m'f_2$, $f_2e_2=e'_2f_1$ and $f_2e_1=e'_1f_1$. When convenient we refer to a morphism of multiplicative graphs as $f\colon{C\to C'}$ and it should be clear that $f=(f_2,f_1,f_0)$, $C=(C_1,C_0,d,e,c,m)$ and $C'=(C'_1,C'_0,d',e',c',m)$.  

There is an obvious forgetful functor from the category of multiplicative graphs, $\MG(\C)$, to the category of reflexive graphs, $\RG(\C)$. The functor is denoted $F_2$. A class $\M$ of spans not only  gives rise to a subcategory $\RG(\C,\M)$ but also to a subcategory $\MG(\C,\M)$. This construction  can also be seen as a pullback diagram in Cat 
\[\xymatrix{\MG(\C,\M)\ar[r]^{F_2^{\M}}\ar[d]&\RG(\C,\M)\ar[d]\\ \MG(\C) \ar[r]^{F_2}&\RG(\C)}\]
so that the functor $F_2^{\M}$ is nothing but the restriction of $F_2$ to the class $\M$.

When we say that the forgetful functor from multiplicative graphs, whose span part is from $\M$, into reflexive graphs (with span part from $\M$) has a section, we really mean that there exists a natural multiplication on every reflexive graph whose span part is from $\M$. For example, if $\M$ is the class of all relations, it claims that every reflexive relation is transitive.

\subsection{The kernel pair construction}

In the proof of the main Theorem we will need a way of transforming a span into a reflexive graph. This will be done with the use of the following general construction on a span, which will be called the kernel pair construction.

Let $(D,d,c)$ be a span. The kernel pair construction is obtained by combining the kernel pairs of the morphisms $d$ and $c$ with the pullback of its projections and induced injections as illustrated.
\[
\xymatrix@!0@=4em{
D(d,c) \ar@<.5ex>[r]^-{p_2} \ar@<-.5ex>[d]_-{p_1}
& D(c) \ar@<.5ex>[l]^-{e_2} \ar@<-.5ex>[d]_-{c_1} \ar@<0ex>[r]^{c_2}
& D \ar[d]_{c}
 \\
D(d)
 \ar@<.5ex>[r]^-{d_2} \ar@<-.5ex>[u]_-{e_1} \ar[d]_{d_1}
& D
 \ar@<.5ex>[l]^-{\Delta} \ar@<-.5ex>[u]_-{\Delta} \ar[r]^{c} \ar[d]_{d}
& D_1 \\
D \ar[r]^{d}
& D_0}
\]

When $\C$ is the category of sets and maps, we may think of an element in $D$ as an arrow whose domain and codomain are drawn from different sets. In other words an element $x\in D$ is  displayed as
\[\xymatrix{D_0\ni d(x)\ar[r]^{x} & c(x)\in D_1.}\]
In view of this interpretation, the elements in $D(d)$ are the pairs $(x,y)$, $x,y\in D$, such that $d(x)=d(y)$ and they may be pictured as
\[\xymatrix{c(x) & d(x)=d(y) \ar[l]_-{x} \ar[r]^-{y} & c(y)}\] or in a simpler form as 
\[\xymatrix{\cdot & \cdot \ar[l]_-{x} \ar[r]^-{y} & \cdot}\]
Similarly, a pair $(y,z)\in D(c)$ is pictured as
\[\xymatrix{\cdot  \ar[r]^-{y} & \cdot & \cdot \ar[l]_-{z} }\]
and it follows that the elements in $D(d,c)$ are the triples $(x,y,z)$ such that $d(x)=d(y)$ and $c(y)=c(z)$, which may be pictured as
\[\xymatrix{\cdot & \cdot \ar[l]_{x} \ar[r]^-{y} & \cdot & \cdot \ar[l]_-{z} }\]

In other words, when $\C$ is the category of sets and maps we have:
\begin{eqnarray*}
d_1(x,y)=x\\
d_2(x,y)=y\\
c_1(y,z)=y\\
c_2(y,z)=z\\
\Delta(y)=(y,y)\\
p_1(x,y,z)=(x,y)\\
p_2(x,y,z)=(y,z)\\
e_1(x,y)=(x,y,y)\\
e_2(y,z)=(y,y,z)
\end{eqnarray*}

The kernel pair construction gives rise to a functor $$K_1\colon{\Span(\C)\to\RG(\C)},$$ with
$K_1(D,d,c)=(D(d,c),D,d_1p_1,\langle\Delta,\Delta\rangle,c_2p_2)$.
See \cite{NMF.VdL.14} for further information on the kernel pair construction. 

%\texttt{There is no way to ensure that the functor $K_1$ restricts to the class $\M$, this fact was hidden until now. The proof that I had in mind to prove (3)->(4) was the following: start with a span in $\M$, take its image under the functor $K_1$ which is a reflexive graph (but it is not necessarily in $\M$, and here is where the argument fails), if this reflexive graph would be multiplicative then we could compose its multiplication with the morphism $d_2p_1=c_1p_2$ and it would give a pregroupoid structure on the original span. This is trivially true when $\M$ is the class of all spans and it is also true for the cases $\M$ all relations and $\M$ all strong relations, but I don't see how to generalize it. Any idea?} 

\subsection{Stability under pullbacks}

Under the assumption that the class $\M$ is stable under pullbacks, the functor $K_1$ restricts to $$K_1^{\M}\colon{\Span(\C,\M)\to\RG(\C,\M)}.$$

An alternative way of obtaining the kernel pair construction, if in the presence of binary products, is to take the following pullback
\[\xymatrix{D(d,c)\ar[r]\ar[d]& D\ar[d]^{\langle d,c\rangle}\\D\times D\ar[r]^{d\times c} & D_0\times D_1}\] 

The requirement asking that $\M$ is pullback stable means precisely that for every span $(D,d,c)$ in $\M$ and for every two morphisms $u\colon{U\to D_0}$ and $v\colon{V\to D_1}$, the span $(B,d',c')$ obtained by taking pullbacks as shown in the following picture,
\[
\xymatrix{
& & B\ar[dl] \ar@<-.5ex>@/_/[lldd]_{d'}\ar[rd]\ar@<.5ex>@/^/[rrdd]^{c'}\\
& A\ar[ld]\ar[rd]&& C\ar[ld]\ar[rd]\\
 U\ar[rd]_{u}& & D \ar[ld]^{d}\ar[rd]_{c}&& V \ar[ld]^{v}\\
 & D_0 && D_1}\]
is still in $\M$.

\subsection{Pregroupoids}
 
 A pregroupid \cite{AndersKock}, internal to a category $\C$, consists of  a span 
\[\xymatrix{& D \ar[ld]_{d} \ar[rd]^{c}\\D0 && D1}\]
together with a pregroupoid structure. A pregroupoid structure is  a morphism $p\colon{D({d,c})\to D}$, such that 
\begin{gather}
p e_1 =d_1\quad\text{and}\quad p e_2 =c_2,\label{Mal'tsev-conditions}\\
dp =d c_2 p_2\quad\text{and}\quad cp = c d_1 p_1.\label{Domain-and-Codomain}
\end{gather}

The object $D(d,c)$ is obtained together with the maps $$d_1,d_2,c_1,c_2,p_1,p_2,e_1,e_2$$ by means of the kernel pair construction, as explained in the previous subsection. In set-theoretical terms, the object $D({d,c})$ consists on those triples $(x,y,z)$ of arrows in $D$ for which $d(x)=d(y)$ and $c(y)=c(z)$, so that the two conditions $(\ref{Mal'tsev-conditions})$ are 
\[p(x,y,y)=x,\quad p(y,y,z)=z\] while the two conditions $(\ref{Domain-and-Codomain})$ become
\[dp(x,y,z)=d(z), \quad cp(x,y,z)=c(x). \]

In this way we form the category of pregroupoids with its span part drawn from the class $\M$. It will be denoted as $\PreGrpd(\C,\M)$.

%A pregroupoid is associative if for every $x,y,z\in D$, as above, together with $u,v\in D$ with $d(z)=d(u)$ and $c(u)=c(v)$, we have
%\begin{equation}\label{associative for a pregroupoid}
%p(p(x,y,z),u,v)=p(x,y,p(z,u,v)).
%\end{equation}

\subsection{Internal categories and internal groupoids}

An internal category is a multiplicative graph in which the multiplication is associative. The category of internal categories to $\C$ is denoted $\Cat(\C)$. A groupoid is an internal category in which every morphism is invertible. Internally,  it can be seen as an associative multiplicative graph in which the square 
\begin{equation}
\xymatrix{C_2 \ar[r]^{\pi_2}\ar[d]_{\pi_1} & C_1 \ar[d]^{d}\\
 C_1\ar[r]^{d} & C_0}
\end{equation}
is a pullback (see \cite{BB}). The category of internal groupoids internal to $\C$ is denoted $\Grpd(\C)$. This explains the list of forgetful functors $F_i$, $i=2,3,4$, and the vertical inclusions shown in Diagram \ref{diag:F1toF4}.

%In the same way as the category of internal categories is a full subcategory of the category of multiplicative graphs, the category of associative pregroupoids is a full subcategory of the category of pregroupoids. Moreover the category of internal groupoids is a full subcategory of the category of internal categories. Once again, we are taking all the spans involved in the definitions above to be spans from the class $\M$. 

In a similar manner as before we define the categories $\Cat(\C,\M)$ and $\Grpd(\C,\M)$ of internal categories and internal groupoids in $\C$ with respect to a class $\M$ of spans.

\subsection{Multiplicative kites}

The notion of a kite was first considered in \cite{NMF.08} as admissibility diagram. It was then considered in \cite{NMF.VdL.14} as a kite. It's main purpose is to generalize the structure of a groupoid and a pregroupoid so that it can be used as a setting where it is possible to transform a groupoid into a pregroupoid and vice versa.

A kite, internal to $\C$, is a diagram of the form
\begin{equation}\label{kite}
\vcenter{\xymatrix@!0@=4em{A \ar@<.5ex>[r]^-{f} \ar[rd]_-{\alpha} & B
\ar@<.5ex>[l]^-{r}
\ar@<-.5ex>[r]_-{s}
\ar[d]^-{\beta} & C \ar@<-.5ex>[l]_-{g} \ar[ld]^-{\gamma}\\
& D}}
\end{equation}
with $fr=1_{B}=gs$, $\alpha r=\beta=\gamma s$.

A directed kite is a kite together with a span $(D,d,c)$ such that $d\alpha=d\beta f$, $c\beta g=c\gamma$.

 Once again, if the span part of a kite is required to be in $\M$ then it is an object in the  category $\Kite(\C,\M)$, where the morphisms are the natural transformations between such diagrams.

Each diagram such as $(\ref{kite})$ induces a diagram
\begin{equation}%\label{kite}
\vcenter{\xymatrix@!0@=3em{ & C \ar@<.5ex>[ld]^-{e_2} \ar@<-.5ex>[rd]_-{g}
\ar@/^/[rrrd]^-{\gamma} \\
A\times_{B}C \ar@<.5ex>[ru]^-{\pi_2}
\ar@<-.5ex>[rd]_-{\pi_1} && B \ar@<.5ex>[ld]^-{r} \ar@<-.5ex>[lu]_-{s}
 \ar[rr]|-{\beta} && D\\
& A \ar@<.5ex>[ru]^-{f} \ar@<-.5ex>[lu]_-{e_1} \ar@/_/[urrr]_-{\alpha}}}\end{equation}
in which the double diamond is a double split epimorphism (or a split square). The morphisms $e_1, e_2$ are determined as $e_1=\langle 1_A, sf\rangle$ and $e_2=\langle rg, 1_C\rangle$.

A multiplication on a kite is a morphism $m\colon{A\times_{B} C\to D}$ such that $dm=d\gamma\pi_2$, $cm=c\alpha\pi_1$, $me_1=\alpha$ and $me_2=\gamma$.

We will consider the forgetful functor from the category of multiplicative kites into the category of directed kites, with direction (that is the span part) drawn from the class $\M$. This functor will be called $F_0^{\M}$ in Theorem \ref{theorem 1}.

\[\xymatrix{\MKite(\C,\M)\ar[d]^{F_0^{\M}} \\ \Kite(\C,\M)}\]

One more ingredient is needed in order to understand the statement of Theorem \ref{theorem 1}, which is the notion of a split square being compatible with a span. Before entering into that let us first give a short list of examples for a directed kite. This information will be used  in the proof of Theorem~\ref{theorem 1}.

List of examples of directed kites, for a later reference:
\begin{enumerate}
\item If $(C_1,C_0,d,e,c)$ is a reflexive graph then the following diagram is a directed kite
\begin{equation}\label{diag: kite1}
\vcenter{\xymatrix{C_1 \ar@<.5ex>[r]^-{d} \ar@{=}[rd]_-{} & C_0
\ar@<.5ex>[l]^-{e}
\ar@<-.5ex>[r]_-{e}
\ar[d]^-{e} & C_1 \ar@<-.5ex>[l]_-{c} \ar@{=}[ld]^-{}\\
& C_1\ar[dl]_{d}\ar[rd]^{c}\\\cdot&&\cdot}}
\end{equation}

This directed kite is multiplicative if and only if the reflexive graph is a multiplicative graph.

\item If $(C_1,C_0,d,e,c,m)$ is a multiplicative graph then the following diagram is a directed kite
\begin{equation}\label{diag: kite2}
\vcenter{\xymatrix{C_2 \ar@<.5ex>[r]^-{\pi_2} \ar[rd]_-{m} & C_1
\ar@<.5ex>[l]^-{e_2}
\ar@<-.5ex>[r]_-{e_1}
\ar@{=}[d]^-{} & C_1 \ar@<-.5ex>[l]_-{\pi_1} \ar[ld]^-{m}\\
& C_1\ar[dl]_{d}\ar[rd]^{c}\\\cdot&&\cdot}}
\end{equation}

This directed kite has a unique multiplicative structure if and only if the multiplicative graph is associative (i.e., an internal category).

\item If $(C_1,C_0,d,e,c,m)$ is an associative multiplicative graph (that is, an internal category) then the following diagram is a directed kite
\begin{equation}\label{diag: kite3}
\vcenter{\xymatrix{C_2 \ar@<.5ex>[r]^-{m} \ar[rd]_-{\pi_2} & C_1
\ar@<.5ex>[l]^-{e_2}
\ar@<-.5ex>[r]_-{e_1}
\ar@{=}[d]^-{} & C_1 \ar@<-.5ex>[l]_-{m} \ar[ld]^-{\pi_1}\\
& C_1\ar[dl]_{d}\ar[rd]^{c}\\\cdot&&\cdot}}
\end{equation}

This directed kite is multiplicative if and only the internal category is an internal groupoid (see \cite{NMF.08}).

\item If $(f_1,f_0)\colon{(C_1,C_0,d,e,c)\to (C'_1,C'_0,d',e',c')}$ is a morphism of reflexive graphs then the following diagram is a directed kite
\begin{equation}\label{diag: kite4}
\vcenter{\xymatrix{C_1 \ar@<.5ex>[r]^-{d} \ar@{->}[rd]_-{f_1} & C_0
\ar@<.5ex>[l]^-{e}
\ar@<-.5ex>[r]_-{e}
\ar[d]^-{e'f_0} & C_1 \ar@<-.5ex>[l]_-{c} \ar@{->}[ld]^-{f_1}\\
& C'_1\ar[dl]_{d'}\ar[rd]^{c'}\\\cdot&&\cdot}}
\end{equation}

If the morphism of reflexive graphs can be extended to a morphism of multiplicative reflexive graphs then the induced directed kite represented in the diagram above is multiplicative.

\item If $(D,d,c)$ is a span then the kernel pair construction gives a directed kite as follows
\begin{equation}\label{diag: kite5}
\vcenter{\xymatrix{D(d) \ar@<.5ex>[r]^-{d_2} \ar@{->}[rd]_-{d_1} & D
\ar@<.5ex>[l]^-{\Delta}
\ar@<-.5ex>[r]_-{\Delta}
\ar@{=}[d]^-{} & D(c) \ar@<-.5ex>[l]_-{c_1} \ar@{->}[ld]^-{c_2}\\
& D\ar[dl]_{d}\ar[rd]^{c}\\\cdot&&\cdot}}
\end{equation}

This yields a reflection between the category of directed kites  and the category of spans

\[\xymatrix{\Kite \ar@<.5ex>[r]  &\Span\ar@<.5ex>[l]}\]

A directed kite goes to its direction span, a span goes to the directed kite displayed above. Moreover, the span $(D,d,c)$ is a pregroupoid if and only if its associated directed kite is multiplicative.

\item One last example that we will need later on is obtained from a split square. Indeed, any split square as illustrated in the diagram below, display $(\ref{split square})$, gives rise to a directed kite as illustrated below.
\begin{equation}\label{diag: kite6}
\vcenter{\xymatrix@!0@=4em{A \ar@<.5ex>[r]^-{f} \ar[rd]_-{e_1} & B
\ar@<.5ex>[l]^-{r}
\ar@<-.5ex>[r]_-{s}
\ar[d]|-{e_1r=e_2s} & C \ar@<-.5ex>[l]_-{g} \ar[ld]^-{e_2}\\
& D\ar[dl]_{p_2}\ar[rd]^{p_1}\\C&&A}}
\end{equation}

\end{enumerate}

\subsection{Split squares compatible with spans}

A split square is a diagram of the shape
\begin{equation}\label{split square}
\xymatrix@!0@=4em{
E
\ar@<.5ex>[r]^-{p_2} \ar@<-.5ex>[d]_-{p_1}
& C
\ar@<.5ex>[l]^-{e_2} \ar@<-.5ex>[d]_-{g}
 \\
A
 \ar@<.5ex>[r]^-{f} \ar@<-.5ex>[u]_-{e_1}
& B
 \ar@<.5ex>[l]^-{r} \ar@<-.5ex>[u]_-{s}
 }
\end{equation}
such that $fr=1_B=gs$, $p_2e_2=1_C$, $p_1e_1=1_A$, $p_2e_1=sf$, $p_1e_2=rg$ and  $gp_2=fp_1$.

Every split square gives rise to a comparison morphism $p\colon{E\to A\times_B C}$ into the pullback of $g$ along $f$, which will be denoted as a split pullback, and it is obtained as
\begin{equation}\label{split pulback}
\xymatrix@!0@=4em{
A\times_B C
\ar@<.5ex>[r]^-{\pi_2} \ar@<-.5ex>[d]_-{\pi_1}
& C
\ar@<.5ex>[l]^-{\epsilon_2} \ar@<-.5ex>[d]_-{g}
 \\
A
 \ar@<.5ex>[r]^-{f} \ar@<-.5ex>[u]_-{\epsilon_1}
& B
 \ar@<.5ex>[l]^-{r} \ar@<-.5ex>[u]_-{s}
 }
\end{equation}
with $\epsilon_1=\langle 1_{A},sf\rangle$ and $\epsilon_2=\langle rg, 1_{C} \rangle$.

We are now in position to introduce the notion of a split square compatible with a span.

\begin{definition}\label{def1} A split square, such as $(\ref{split square})$, is said to be compatible with a span $(D,d,c)$ if  for every morphism \[u\colon{E\to D}\] with $du=due_2p_2$ and $cu=cue_1p_1$, there exists a unique morphism \[\theta\colon{A\times_B C\to D}\] such that $\theta\epsilon_1=ue_1$, $\theta\epsilon_2=ue_2$, $d\theta=due_2\pi_2$ and $c\theta=cue_1\pi_1$.
\end{definition}

When a split square is compatible with all the spans from a class of spans $\M$ then we say that it is $\M$-compatible.

\section{The main result}

We recall the functors that are involved in the statement of the main theorem as well as the ones that will be used in the proof.

\[
\xymatrix{
\Grpd(\C,\M)\ar[d]\ar[r]^{F_4^{\M}} & \RG(\C,\M)\ar@{=}[d]\\
\Cat(\C,\M)\ar[d]\ar[r]^{F_3^{\M}} & \RG(\C,\M)\ar@{=}[d]\\
\MG(\C,\M)\ar[r]^{F_2^{\M}} & \RG(\C,\M)\ar@<-.5ex>[d]\\
\PreGrpd(\C,\M)\ar@{-->}[u]\ar[d]\ar[r]^{F_1^{\M}} & \Span(\C,\M)\ar@<-.5ex>[d]\ar@{-->}@<-.5ex>[u] & \M\ar[l]_-{\cong}\\
\MKite(\C,\M)\ar[r]^{F_0^{\M}} & \Kite(\C,\M)\ar@<-.5ex>[u]_{}}
\]

\begin{theorem}\label{theorem 1}
Let $\C$ be a category with pullbacks and equalizers. If $\M$ is a class of spans in $\C$ which contains all identity spans and is stable under pullbacks, then t.f.c.a.e:
\begin{enumerate}
\item[(1)] $F_4^{\M}\colon{\Grpd(\C,\M)\to \RG(\C,\M)}$ has a section.
\item[(2)] $F_3^{\M}\colon{\Cat(\C,\M)\to \RG(\C,\M)}$ has a section.
\item[(3)] $F_2^{\M}\colon{\MG(\C,\M)\to \RG(\C,\M)}$ has a section.
\item[(4)] $F_1^{\M}\colon{\PreGrpd(\C,\M)\to \Span(\C,\M)}$ has a section.
\item[(5)] $F_0^{\M}\colon{\MKite(\C,\M)\to \Kite(\C,\M)}$ is an isomorphism.
\item[(6)] $F_1^{\M}\colon{\PreGrpd(\C,\M)\to \Span(\C,\M)}$ is an isomorphism.
\item[(7)] $F_2^{\M}\colon{\MG(\C,\M)\to \RG(\C,\M)}$ is an isomorphism.
\item[(8)] $F_3^{\M}\colon{\Cat(\C,\M)\to \RG(\C,\M)}$ is an isomorphism.
\item[(9)] $F_4^{\M}\colon{\Grpd(\C,\M)\to \RG(\C,\M)}$ is an isomorphism.
\item[(10)] Every split square in $\C$ is $\M$-compatible.
\end{enumerate}
\end{theorem}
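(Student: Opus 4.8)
The plan is to make conditions (5) and (10) the hub to which everything else is attached, so that the ten statements collapse to a short list of implications. A number of these are immediate. An isomorphism of categories is in particular a functor with a section, which gives (9)$\Rightarrow$(1), (8)$\Rightarrow$(2), (7)$\Rightarrow$(3) and (6)$\Rightarrow$(4). The full inclusions $\Grpd(\C,\M)\hookrightarrow\Cat(\C,\M)\hookrightarrow\MG(\C,\M)$ commute with the forgetful functors to $\RG(\C,\M)$, so post-composing a section of $F_4^{\M}$ with them yields sections of $F_3^{\M}$ and then $F_2^{\M}$, giving (1)$\Rightarrow$(2)$\Rightarrow$(3). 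For (3)$\Rightarrow$(4) I would transport a section of $F_2^{\M}$ along the kernel pair functor $K_1^{\M}\colon\Span(\C,\M)\to\RG(\C,\M)$, using the correspondence between multiplicative structures on $K_1(D,d,c)$ and pregroupoid structures on $(D,d,c)$ from \cite{NMF.VdL.14}; naturality is inherited from that of the section and the functoriality of $K_1^{\M}$. It then remains to propagate (5) to the isomorphism statements, to identify (5) with (10), and to feed one section back into (10).

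For (5)$\Rightarrow$(6),(7),(8),(9) I would read the list of examples of directed kites as presenting the other forgetful functors as consequences of $F_0^{\M}$. The kite (\ref{diag: kite5}) exhibits $\PreGrpd(\C,\M)$ as the pullback of $F_0^{\M}$ along the reflection $\Span(\C,\M)\to\Kite(\C,\M)$ (a span is a pregroupoid precisely when its directed kite is multiplicative), and the kite (\ref{diag: kite1}) exhibits $\MG(\C,\M)$ as the pullback of $F_0^{\M}$ along $\RG(\C,\M)\to\Kite(\C,\M)$; since a pullback of an isomorphism is an isomorphism, (5) gives (6) and (7). The kite (\ref{diag: kite2}) shows that if $F_0^{\M}$ is an isomorphism then every multiplicative graph is associative, so the inclusion $\Cat(\C,\M)\hookrightarrow\MG(\C,\M)$ is an isomorphism, which with (7) gives (8); the kite (\ref{diag: kite3}) then shows every internal category is a groupoid, so $\Grpd(\C,\M)=\Cat(\C,\M)$ and (8) gives (9).

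The identification (5)$\Leftrightarrow$(10) is exactly what Definition \ref{def1} is engineered to produce. For (5)$\Rightarrow$(10), given a split square, a span $(D,d,c)$ in $\M$ and a map $u\colon E\to D$ with $du=due_2p_2$ and $cu=cue_1p_1$, I would build the directed kite on the bottom row $A\rightleftarrows B\rightleftarrows C$ of the split square with apex maps $\alpha=ue_1$, $\gamma=ue_2$, $\beta=ue_1r=ue_2s$ and direction $(D,d,c)$ (the kite of (\ref{diag: kite6}) composed with $u$): the two normalization equations on $u$ are precisely the directedness conditions $d\alpha=d\beta f$ and $c\beta g=c\gamma$, so the unique multiplication granted by (5) is exactly the unique $\theta$ required by compatibility. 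Conversely, every directed kite has an underlying split square, and feeding a given multiplication $m$ back in as the datum $u=m$ (which satisfies the normalization equations since $\pi_i\epsilon_i=1$) produces a unique compatible $\theta$ with the same boundary; as any multiplication of the kite is such a $\theta$, they all coincide, which yields (5) once existence is in hand.

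The cycle is closed by (4)$\Rightarrow$(10). A section of $F_1^{\M}$ is a natural pregroupoid operation $p$ on every span in $\M$, and given the data above I would produce $\theta$ by the Mal'tsev-type formula $\theta=p\langle ue_1\pi_1,\,ue_2sf\pi_1,\,ue_2\pi_2\rangle$; checking $\theta\epsilon_1=ue_1$, $\theta\epsilon_2=ue_2$, $d\theta=due_2\pi_2$ and $c\theta=cue_1\pi_1$ reduces, using the identity $e_1r=e_2s$ of (\ref{diag: kite6}) and the normalization equations, to the pregroupoid axioms $p(x,y,y)=x$, $p(y,y,z)=z$, $dp=dc_2p_2$ and $cp=cd_1p_1$. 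This settles existence, and I expect the \emph{uniqueness} of $\theta$ — equivalently, the upgrade from ``admits a section'' to ``is an isomorphism'' — to be the main obstacle of the whole theorem. The decisive observation is that the normalization equations say exactly that $u$ is a morphism of spans $(E,p_2,p_1)\to(D,d,c)$ over the base $(due_2,cue_1)$; leveraging the naturality of the section together with the stability of $\M$ under pullbacks to force any competing $\theta'$ onto the formula above is the delicate part, far more so than the existence computations.
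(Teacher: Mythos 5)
Your plan tracks the paper's own proof in most of its links: the trivial implications, (1)$\Rightarrow$(2)$\Rightarrow$(3) via the inclusions, (3)$\Rightarrow$(4) via the kernel pair functor, (5)$\Rightarrow$(6)--(9) via the list of kites (\ref{diag: kite5}), (\ref{diag: kite1})--(\ref{diag: kite4}), and (5)$\Rightarrow$(10) via the kite built on a split square are all exactly the paper's steps. The structural difference is that you close the cycle by (4)$\Rightarrow$(10)$\Rightarrow$(5), whereas the paper proves (4)$\Rightarrow$(5) directly and then (5)$\Rightarrow$(10) and (10)$\Rightarrow$(4), the last via a split square whose top object is an \emph{equalizer} rather than the pullback, so that the canonical $u$ exists and compatibility hands over the pregroupoid structure. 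Your rerouting would be legitimate if each link were complete, but it is not: the uniqueness assertion is never proved anywhere in your plan, and you say so yourself. Definition \ref{def1} demands a \emph{unique} $\theta$, so your (4)$\Rightarrow$(10) is only half done (your Mal'tsev-type formula and its verification do give existence); and (5) asserts that $F_0^{\M}$ is an \emph{isomorphism}, so your (10)$\Rightarrow$(5) argument (feed $u=m$ back in) only delivers uniqueness if the uniqueness clause of (10) is already available --- which is exactly what the incomplete (4)$\Rightarrow$(10) fails to supply. Both broken links are the same missing statement; with it absent, conditions (1)--(4) yield only existence of the various structures and the cycle never closes, so none of (5)--(10) is reached.

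The missing idea is the short naturality argument inside the paper's proof of (4)$\Rightarrow$(5), and it is the crux of the whole theorem. Since $\M$ contains identity spans and is stable under pullbacks, the pullback span $(A\times_B C,\pi_1,\pi_2)$ of any directed kite lies in $\M$, hence carries the \emph{natural} pregroupoid structure $p_{\pi_1,\pi_2}$ provided by the section of $F_1^{\M}$; this structure is split, $p_{\pi_1,\pi_2}\delta=1_{A\times_B C}$, by the canonical zigzag $\delta$ through $sf$ and $rg$ (here the joint monicity of the pullback projections together with the axioms (\ref{Domain-and-Codomain}) do the work). Any multiplication $\varphi$ on the kite is a morphism of spans into $(D,d,c)$ (with the legs suitably ordered), so naturality of the section gives $\varphi\, p_{\pi_1,\pi_2}=p_{d,c}\,\varphi^3$, whence $\varphi=\varphi\, p_{\pi_1,\pi_2}\delta=p_{d,c}\,\varphi^3\delta=p_{d,c}\theta$: every multiplication is forced to coincide with the constructed one. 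This is precisely the step that upgrades ``has a section'' to ``is an isomorphism'' and makes the uniqueness clause of (10) attainable. You correctly guess the ingredients (``naturality of the section together with the stability of $\M$ under pullbacks'') but defer the execution as ``the delicate part''; since every other step in your proposal is routine and shared with the paper, what you have omitted is not a verification but the theorem's essential content.
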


\begin{proof}

 The proof is done by showing the following list of implications:
\begin{itemize}

\item[$(1)\Rightarrow(2)$] If the functor $F_4^{\M}$ has a section then by composition with the inclusion $\Grpd\to\Cat$ it gives the desired section to the functor $F_3^{\M}$.

\item[$(2)\Rightarrow(3)$] If the functor $F_3^{\M}$ has a section then by composition with the inclusion $\Cat\to\MG$ it gives the desired section to the functor $F_2^{\M}$.

\item[$(3)\Rightarrow(4)$] 
%\texttt{This proof is still open.}
Start with a span in $\M$, take its image under the functor $K_1$ which is a reflexive graph (it is in $\M$, by pullback stability), and hence this reflexive graph is multiplicative and we can compose its multiplication with the morphism $d_2p_1=c_1p_2$ and this  gives a pregroupoid structure on the original span. See also \cite{NMF.VdL.14} where the same argument has been used.

\item[$(4)\Rightarrow(5)$] 

Let us assume that the functor $F_1^{\M}$ has a section. This means that every span $(D,d,c)$ in $\M$ is canonically equipped with a pregroupoid structure. First we will show that the functor $F_0^{\M}$ has a section and then we will prove that it is an isomorphism. To see that is has a section, consider any directed kite with its direction in $\M$, let us say
\begin{equation}\label{diag: kite6a}
\vcenter{\xymatrix@!0@=4em{A \ar@<.5ex>[r]^-{f} \ar[rd]_-{u} & B
\ar@<.5ex>[l]^-{r}
\ar@<-.5ex>[r]_-{s}
\ar[d]^-{w} & C \ar@<-.5ex>[l]_-{g} \ar[ld]^-{v}\\
& D\ar[dl]_{d}\ar[rd]^{p_1}\\D_0&&D_1}}
\end{equation}
then, by assumption and because its direction span $(D,d,c)$ is in $\M$, it is a pregroupoid. In other words, there is a natural pregroupoid structure $p_{d,c}\colon{D(d,c)\to D}$. The desired multiplication on the given directed kite is obtained by composing the morphism $p_{d,c}$ with the morphism $\theta\colon{A\times_B C\to D(d,c)}$, which is uniquely determined by $p_1\theta=\langle u, urf\rangle$ and $p_2\theta=\langle vsg, v\rangle$ (with $p_1$ and $p_2$ the morphisms obtained as in the kernel pair construction). It is readily checked that the needed conditions are satisfied with $m=p_{d,c}\theta$.

This shows that the functor $F_1^{\M}$ has a section. To prove that it is an isomorphism, we observe that the span $(A\times_B C, \pi_1,\pi_2)$ is a span in $\M$, because it is obtained by pullback from an identity span, and moreover it admits a section $$\delta\colon{A\times_B C\to A\times_B C(\pi_1,\pi_2)},$$ sending a pair $(a,c)$ to the zigzag 
\[\xymatrix{a&\ar[l] sf(a) \ar[r] & rg(c) & c \ar[l] }\]

If we denote by $p_{\pi_1,\pi_2}\colon{A\times_B C(\pi_1,\pi_2)\to A\times_B C}$ the morphisms asserting that the span $(A\times_B C, \pi_1,\pi_2)$ is difunctional, then we have $p_{\pi_1,\pi_2}\delta=1_{A\times_B C}$. If we now assume the existence of another multiplication, say $\varphi\colon{A\times_B C\to D}$ on our directed kite, then we observe that the naturality of $p$ gives rise to a commutative square
\[\xymatrix{
A\times_B C(\pi_1,\pi_2)\ar[d]_{\varphi^{3}}\ar[r]^-{p_{\pi_1,\pi_2}}& A\times_B C \ar[d]^{\varphi}\ar@{-->}[ld]_{\theta}\\
D(d,c)\ar[r]^{p_{d,c}} & D
}\]

from which we conclude $\varphi=p_{d,c}\theta$, since we have
\begin{eqnarray}
\varphi=\varphi p_{\pi_1,\pi_2}\delta=p_{d,c}\varphi^3 \delta=p_{d,c}\theta
\end{eqnarray}
This proves that $F_0^{\M}$ is an isomorphism.

\item[$(5)\Rightarrow(6)$] 

If $F_0^{\M}$ is an isomorphism then $F_1^{\M}$ is also an isomorphism. Example illustrating Diagram $(\ref{diag: kite5})$ tell us that every span gives rise to a directed kite. Moreover it is not difficult to see that a span is a pregroupoid if and only if the directed kite obtained by the kernel pair construction (see Diagram $(\ref{diag: kite5})$) is multiplicative. This shows that there is a unique pregroupoid structure on every span from $\M$ because it can be reduced to a special type of directed kite.

\item[$(5)\Rightarrow(7)$] Similarly to the previous item. Given a reflexive graph whose span part is in $\M$, we observe that it is a multiplicative graph if and only if the directed kite displayed as diagram $(\ref{diag: kite1})$ is a multiplicative kite. The naturality of the multiplication comes from the fact that the kite displayed in diagram $(\ref{diag: kite4})$ is multiplicative. Indeed, the uniqueness of its multiplicative structure forces $f_1m=mf_2$.

\item[$(5)\Rightarrow(8)$]

 Similarly to the previous item. Given a multiplicative graph whose span part is in $\M$, we observe that it is an internal category if and only if the directed kite displayed as Diagram $(\ref{diag: kite2})$ is a multiplicative kite. Indeed, the uniqueness of the multiplication forces the two possible morphisms $m(1\times m)$ and $m(m\times 1)$ to be equal because they are both candidates to a multiplicative structure on Diagram $(\ref{diag: kite2})$.

\item[$(5)\Rightarrow(9)$]
 Similarly to the previous item. Given an internal category whose span part is in $\M$, we observe that it is an internal groupoid if and only if the directed kite dislayed as Diagram $(\ref{diag: kite3})$ is a multiplicative kite. See \cite{NMF.08} for more details.

\item[$(5)\Rightarrow(10)$]
Consider a split square such as the one displayed in $(\ref{split square})$ together with a span in $\M$ and a morphism $u\colon{E\to D}$ with $du=due_2p_2$ and $cu=cue_1p_1$. Then, diagram $\ref{diag: kite6}$ combined with the morphism $u$ gives rise to a directed kite with the span $(D,d,c)$ as its direction
\begin{equation}\label{diag: kite7}
\vcenter{\xymatrix@!0@=4em{A \ar@<.5ex>[r]^-{f} \ar[rd]_-{e_1} \ar@/_/[ddr]_{ue_1} & B
\ar@<.5ex>[l]^-{r}
\ar@<-.5ex>[r]_-{s}
\ar[d]^-{} & C \ar@<-.5ex>[l]_-{g} \ar[ld]^-{e_2}\ar@/^/[ddl]^{ue_2}\\
& E\ar[d]^{u}\\
& D\ar[dl]_{d}\ar[rd]^{c}\\D_0&&D_1}}
\end{equation}
The multiplication for this kite gives the desired morphism $\theta\colon{A\times_B C\to D}$  satisfying the needed conditions to assert that the split square is compatible with the span.

\item[$(10)\Rightarrow(4)$]
Given a span $(D,d,c)$ we consider the split square
\[\xymatrix@!0@=4em{
E
\ar@<.5ex>[r]^-{p_2} \ar@<-.5ex>[d]_-{p_1}
& D(c)
\ar@<.5ex>[l]^-{e_2} \ar@<-.5ex>[d]_-{c_1}
 \\
D(d)
 \ar@<.5ex>[r]^-{d_2} \ar@<-.5ex>[u]_-{e_1}
& D
 \ar@<.5ex>[l]^-{\Delta} \ar@<-.5ex>[u]_-{\Delta}
 }
\]
which is based on the kernel pair construction, namely the objects $D(d)$ and $D(c)$, as well as the morphisms $d_2,c_2$ and $\Delta$. The remaining part is obtained by taking $E$ to be the equalizer of  the two morphisms
\[\xymatrix{E\ar[r] & D(d)(h)\ar@<.5ex>[r]^-{cd_1h_1}\ar@<-.5ex>[r]_-{cd_1h_2} & D_1}\]  
which are obtained by taking the kernel pair of $h=cd_2$,
\[\xymatrix{D(d)(h)\ar[r]^{h_2}\ar[d]_{h_1}&D(d)\ar[d]^{cd_2}\\D(d)\ar[r]^{cd2}&D_1}\]

In terms of sets and elements, that is when interpreting it in the category of sets and maps, if $x\in D$ is displayed as an arrow
\[\xymatrix{D_0\ni d(x)\ar[r]^{x} & c(x)\in D_1}\]
or simply pictured as $\xymatrix{\cdot\ar[r]^{x}&\cdot}$, then the elements in $D(d)$ are pictured as $\xymatrix{\cdot & \cdot\ar[l]_{x}\ar[r]^{y}&\cdot}$, we then have that the elements in $E$ are of the form (include reference here ??)
\[\xymatrix{\cdot & \cdot \ar[l]_{w}\ar[d]^{z}\\\cdot\ar[u]^{x}\ar[r]^{y}& \cdot}\]
while the maps $p_1,p_2,e_1,e_2$ are defined as follows
\begin{eqnarray}
p_1(x,y,z,w)=(x,y)\\
p_2(x,y,z,w)=(y,z)\\
e_1(x,y)=(x,y,y,x)\\
e_2(y,z)=(y,y,z,z)
\end{eqnarray}
\end{itemize}

By construction we also have a morphism $u\colon{E\to D}$, which in terms of elements would be defined as $u(x,y,z,w)=w$. The assumption that the split square which we have constructed is compatible with the given span, together with the existence of the morphism $u$, gives us a morphism $\theta\colon{D(d,c)\to D}$, which is precisely a pregroupoid structure on the span $(D,d,c)$.

The implications $(6)\Rightarrow(4)$, $(7)\Rightarrow(3)$, $(8)\Rightarrow(2)$ and $(9)\Rightarrow(1)$ are obvious. And this completes the proof.

%\texttt{This would complete the proof, as long as we can fix the argument of implication $(3)\Rightarrow(4)$. It may happen that it only holds for some very special classes of spans $\M$.}

\end{proof}

\section{Conclusion}

The result presented here suggests that every time a category can be characterized by one of the equivalent conditions in Theorem \ref{theorem 1}, for an appropriate class of spans, then it is a sort of Mal'tsev category. The three cases that we have considered, namely all spans, all relations, and all strong relations, characterise, respectively, naturally Mal'tsev categories, Mal'tsev categories and weakly Mal'sev categories (see, respectively, \cite{Johnstone},\cite{Bourn},\cite{NMF.VdL.14}). A new class of relations, namely positive relations, has recently been used \cite{Diana} to classify Goursat categories (a positive relation is of the form $U\circ U$, for some relation $U$). In a similar way it is expected that other examples will arise. Their study is postponed for future work.

%
%If we take $\M$ to be the class of all spans we have the notion of naturally Mal'tsev, by considering only relations we have the notion of Mal'tsev category, while if we consider strong relations we have the notion of weakly Mal'tsev category. The class of difunctional relations is innocuous in the sense that the equivalent conditions on the theorem are satisfied in any category $\C$.
%
%One new class can be added to the list which characterizes Goursat categories. Namely the class of all relations which induce a positive relation via the kernel pair construction [Give references ].
%
%Another interesting class is the class of split spans (spans in which one leg is a split epi , or perhaps just a regular epi) this should be compared with protomodular categories.

\section*{Acknowledgements} 

The author wishes to warmly thank Zurab Janelidze and Tim Van der Linden for their kind collaboration on this project.

This work is supported by Funda\c c\~ ao para a Ci\^ encia e a Tecnologia (FCT) and Centro2020 through the Project references: UID/Multi/04044/2013; PAMI - ROTEIRO/0328/2013 (N. 022158); Next.parts (17963); and also by CDRSP and ESTG from the Polytechnic Institute of Leiria.

%\begin{acknowledgements}
%If you'd like to thank anyone, place your comments here
%and remove the percent signs.
%\end{acknowledgements}

% BibTeX users please use one of
%\bibliographystyle{spbasic}      % basic style, author-year citations
%\bibliographystyle{spmpsci}      % mathematics and physical sciences
%\bibliographystyle{spphys}       % APS-like style for physics
%\bibliography{}   % name your BibTeX data base

\end{document}